\newtheorem{theorem}{Theorem}[section]
\newtheorem{lemma}{Lemma}[section]
\newtheorem{remark}{Remark}[section]
\newtheorem{corollary}{Corollary}
\newtheorem{example}{Example}
\normalfont\fontsize{11}{10}\bfseries}{\thesection}{1em}{}
\xpatchcmd{\author}{\relax#1\relax}{\relax\detokenize{#1}\relax}{}{}
\numberwithin{equation}{section}
\begin{document}
\title{\bf CONTROLLABILITY OF NETWORKED SYSTEM WITH HETEROGENEOUS DYNAMICS}
\author {Abhijith Ajayakumar\hspace*{0.1 cm} and \hspace*{0.1 cm}Raju K. George\\
 Department of Mathematics,  Indian Institute of Space Science and Technology,
 Thiruvananthapuram 695 547, India.\\
\vspace*{0.15 cm}
Corresponding author: abhijithajayakumar.19@res.iist.ac.in}
\date{\today}
\maketitle{}

\begin{abstract}
\noindent In this paper, a necessary and sufficient condition for the controllability of networked systems with heterogeneous dynamics is established where the nodes are higher dimensional linear time invariant(LTI) systems and the network topology is directed and weighted. The controllability of networked system over some specific topologies are also examined and some non-controllability results are obtained. The theoretical results are demonstrated with examples.
\vspace{0.1 in}\\
{\it Keywords:} LTI systems, networked control systems, heterogeneous dynamics, controllability 
{\small }
\vspace{0.1 in}
\end{abstract}

\section{Introduction}\label{section1}
Controllability of dynamical systems  is one of the key concepts in mathematical control theory introduced by R.E Kalman.(\cite{kalman1960general}) State controllability  expresses how ably we can guide a dynamical system to a desired final state from an arbitrary initial state within a finite period of time.(\cite{kalman1962canonical}) Structural controllability introduced by Lin(\cite{lin1974structural}), emphasizes the unparalleled role of system structure in determining the controllability of the  state.   The idea of controllability, whether it is state or structural has been extensively studied for various types of systems and conditions for controllability are obtained for such systems over the past few decades.(\cite{hautus1969controllability,glover1976characterization,mayeda1981structural,
tarokh1992measures,jarczyk2011strong}) Most of these criteria focus on the dynamics of  a single higher dimensional system. However, in the real world, the situation of networked control systems are comparatively much larger than that of the single stand alone control systems. In general, modelling of complex natural and technological systems require a collection of individual systems together with an inter connection topology.  (\cite{farhangi2009path,muller2011few,gu2015controllability,bassett2017network}) Controllability of a large scale complex networked systems give rise to fascinating challenges due to computational burden and  various aspects of the systems like structural complexity, node dynamics , interaction amongst various nodes etc.The question of controllability of networked systems is of utmost importance to the developments in numerous branches of science and engineering. \par 

Numerous principles were formulated to study a networked system's  controllability  over the years which employ different types of graphic properties and  matrix rank conditions .The problem of controllability of inter connected systems date back to the work of Gilbert (\cite{gilbert1963controllability}) in which the controllability and observability conditions of systems connected in parallel and series interconnections are studied followed by the works of Callier et al. (\cite{callier1975necessary}) and Fuhrmann.(\cite{fuhrmann1975controllability}) The interconnection structures are more complex than those considered in these works and hence needed the idea of weighted directed graphs to represent the network topology. By dividing the nodes into leaders and followers, some conditions on network topology were derived by Tanner (\cite{tanner2004controllability}) which ensured the controllability of a group of nodes with a single leader. In the work of Hara et al.(\cite{hara2009lti}) networks in which each node is a copy of the same single-input-single-output(SISO) system were considered and a necessary and sufficient condition for the controllability and observability was obtained. Zhou \cite{zhou2015controllability} established a necessary and sufficient condition for the controllability and observability of a heterogeneous networked system where certain transfer matrices associated with the system have full column rank.  The controllability problem of a networked  multi-input-multi-output was addressed by Wang et al.(\cite{wang2016controllability}) and a necessary and sufficient condition for controllability of a homogeneous system was obtained. Based on the above work, Wang et al.(\cite{wang2017controllability}) further derived a necessary and sufficient condition for the state controllability of  a homogeneous linear time invariant system where communications are performed through one dimensional connections. The controllability of a homogeneous networked system over some special network topologies such as trees,cycles etc were also discussed. A necessary and sufficient condition for the controllability of a multi-input-multi-output(MIMO) homogeneous linear time invariant system with directed, weighted network topology  and higher dimensional node dynamics is derived by Hao et al.\cite{hao2018further} Compared to the work of Wang et al. (\cite{wang2016controllability}) Hao's result is easy to verify as it does not require solving matrix equations. A necessary and sufficient condition for the controllability of a heterogeneous networked system was derived from Popov - Belevitch- Hautus (PBH)(\cite{terrell2009stability}) rank condition by Wang et al.(\cite{peiruwang2017controllability}) The connection between the controllability of the networked topology and controllability of the whole networked system is discussed in detail by Xiang et. al (\cite{xiang2019controllability}) and a necessary and sufficient condition for the controllability of a heterogeneous system is obtained in terms of some rank conditions. The notion of structural controllability of large scale networked systems is also studied extensively.(\cite{zamani2009structural,blackhall2010structural,chapman2013strong,xue2019structural}) Studies on graph theoretic notions on system theoretic properties are also under discussion. (\cite{ji2007graph,rahmani2009controllability,nepusz2012controlling,liu2013graph,yaziciouglu2016graph})  A brief survey of recent advances in the study of the controllability of networked linear dynamical systems, regarding the relationship of the network topology, node dynamics , external control inputs and inner dynamical interactions with controllability of such complex networked systems is shown in Xiang et. al.(\cite{xiang2019advances}) \par

Based the above analysis, most of the available results are for homogeneous linear time invariant networked systems. In this paper a necessary and sufficient condition for the controllability of a heterogeneous system model is obtained. Our result generalizes the work of Hao et. al(\cite{hao2018further}) which was for the controllability of a homogeneous linear time invariant networked systems and this enables us to consider a larger class of systems. Compared to the result in the work  Xiang et. al(\cite{xiang2019controllability}), the condition in this paper does not require solving matrix equations and can be easily verified than the result by Wang et. al(\cite{wang2016controllability}). The paper is organised as follows. Some preliminaries are given in  section 2 . The  controllability problem is formulated in section 3. In section 4, a necessary and sufficient condition is obtained for the controllability of the heterogeneous networked system formulated in section 3  and some controllability results of the networked system over some specific topologies are developed. The derived results are substantiated with examples. Conclusion and some remarks are given in section 5.

\section{Preliminaries}\label{section2}
Throughout, let $\mathbb{R}$ denote the field of real numbers, $\mathbb{R}^n$ denote the vector space of real $n-$vectors and $\mathbb{R}^{m \times n}$ denotes the set of $m \times n$ real matrices. Let $I_N$ be the identity matrix of order $N$, $e_i$ be the row vector with the $i^{th}$ entry 1 and all other entries zero, $diag \lbrace a_1,a_2, \ldots ,a_n \rbrace $ denotes diagonal matrix of order $n \times n$ with diagonal entries $ a_1,a_2, \ldots ,a_n $ and $uppertriang \lbrace a_1,a_2, \ldots ,a_n \rbrace $ denote the upper triangular matrix with diagonal entries $ a_1,a_2, \ldots ,a_n $. For matrices $A_1,A_2, \ldots ,A_n$, $blockdiag \lbrace A_1,A_2, \ldots A_n \rbrace $ denotes the block diagonal matrix with $A_1,A_2, \ldots ,A_n$ as diagonal entries and $blockuppertriang \lbrace A_1,A_2, \ldots A_n \rbrace $ denotes the block upper triangular matrix with $A_1,A_2, \ldots ,A_n$ as diagonal entries. The Kronecker product of matrices $A$ and $B$ is denoted by $A \otimes B$. Let $\sigma(A)$ denote the eigen spectrum of the matrix $A$.
\begin{lemma}( \cite{rugh1996linear}) \label{le1}
A linear time invariant control system characterized by the pair of matrices $(A,B)$ is controllable if and only if $\nu A=\lambda \nu$ implies that $\nu B \neq 0$ where $\nu $ is the non-zero left eigenvector of $A$ associated with the eigenvalue $\lambda $. 
\end{lemma}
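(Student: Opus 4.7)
The statement is the eigenvector form of the Popov–Belevitch–Hautus test, so my plan is to reduce it, in both directions, to the standard Kalman rank criterion: $(A,B)$ with $A\in\mathbb{R}^{n\times n}$ is controllable if and only if the controllability matrix $\mathcal{C}(A,B)=[\,B\ AB\ A^{2}B\ \cdots\ A^{n-1}B\,]$ has full row rank $n$. I will prove the two implications separately, one directly and the other by contrapositive.

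For the forward direction (controllability $\Rightarrow$ eigenvector test), I assume $(A,B)$ is controllable and take any nonzero left eigenvector $\nu$ with $\nu A=\lambda \nu$. If one had $\nu B=0$, then induction on $k$ using $\nu A^{k}B=\lambda^{k}\nu B$ gives $\nu A^{k}B=0$ for every $k\ge 0$, hence $\nu\,\mathcal{C}(A,B)=0$. Since $\nu\neq 0$, this contradicts the full row rank of $\mathcal{C}(A,B)$. Therefore $\nu B\neq 0$, which is exactly the required conclusion.

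For the converse I argue the contrapositive: assuming $(A,B)$ is not controllable, I produce a nonzero left eigenvector $\nu$ of $A$ with $\nu B=0$. Because $\mathcal{C}(A,B)$ then has row rank strictly less than $n$, the left null space
\[
W=\{\,w\in\mathbb{C}^{n}\ :\ w^{\top}A^{k}B=0\text{ for all }k\ge 0\,\}
\]
is nonzero; by the Cayley–Hamilton theorem the condition reduces to $k=0,1,\dots,n-1$, so $W$ is exactly the left null space of $\mathcal{C}(A,B)$. Moreover $W$ is $A^{\top}$-invariant, because if $w\in W$ then $(A^{\top}w)^{\top}A^{k}B=w^{\top}A^{k+1}B=0$ for all $k\ge 0$. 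A nonzero $A^{\top}$-invariant subspace of $\mathbb{C}^{n}$ contains an eigenvector of $A^{\top}$, say $A^{\top}w=\lambda w$ with $w\in W\setminus\{0\}$; setting $\nu=w^{\top}$ gives $\nu A=\lambda \nu$ and $\nu B=0$, contradicting the eigenvector test.

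The only delicate points I anticipate are the two structural facts invoked in the converse: that $W$ can be described by the finite range $k=0,\dots,n-1$ (this is precisely Cayley–Hamilton), and that every nonzero invariant subspace of a linear operator on a finite-dimensional complex vector space contains an eigenvector (the restriction of $A^{\top}$ to $W$ has a characteristic polynomial with a complex root). Once these are in place the argument is essentially a bookkeeping exercise, so I would keep the write-up concise and emphasize the equivalence with the Kalman rank condition rather than reprove it.
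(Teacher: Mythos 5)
Your proposal is correct. The paper gives no proof of this lemma at all---it is stated with a citation to Rugh's \emph{Linear System Theory} and used as a black box---so there is no in-paper argument to compare against. Your two-directional reduction to the Kalman rank criterion is the standard textbook derivation of the eigenvector form of the PBH test, and both halves are sound: the forward direction via $\nu A^{k}B=\lambda^{k}\nu B$ killing the controllability matrix, and the converse via the $A^{\top}$-invariance of the left null space of $\mathcal{C}(A,B)$ together with the existence of an eigenvector in any nonzero invariant subspace over $\mathbb{C}$. The only point worth making explicit in a final write-up is that even though $A$ and $B$ are real, the eigenvalue $\lambda$ and eigenvector $\nu$ must be allowed to be complex (as you implicitly do by working in $\mathbb{C}^{n}$), since otherwise the converse can fail for real matrices with no real eigenvalues; the lemma as used later in the paper is indeed applied to complex eigenpairs, so this is the right convention.
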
 
\begin{lemma} (\cite{horn2012matrix}) \label{le2}
Let $A,B,C$ and $D$ be matrices with appropriate order, then \begin{enumerate}
\item $(A \otimes B)(C \otimes D)=(AC \otimes BD)$
\item $(A \otimes B)^{-1}=A^{-1} \otimes B^{-1}$ if $A$ and $B$ are invertible.
\item $(A+B) \otimes C =A \otimes C + B \otimes C$
\item $A \otimes (B+C)= A \otimes B + A \otimes C$
\item $A \otimes B =0$ if and only if $A=0$ or $B=0$
\end{enumerate}
\end{lemma}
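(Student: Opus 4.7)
The plan is to verify each of the five properties directly from the definition of the Kronecker product: if $A = [a_{ij}]$ is $m \times n$, then $A \otimes B$ is the block matrix whose $(i,j)$ block equals $a_{ij} B$. Once dimensions are fixed so that every product and sum below is well defined, all five items reduce to bookkeeping of blocks or entries.

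For property~1, I would compute the $(i,k)$ block of $(A \otimes B)(C \otimes D)$ by block multiplication:
\begin{equation*}
\sum_{j} (a_{ij} B)(c_{jk} D) \;=\; \Bigl(\sum_{j} a_{ij} c_{jk}\Bigr) BD \;=\; (AC)_{ik}\, BD,
\end{equation*}
which is precisely the $(i,k)$ block of $(AC) \otimes (BD)$. Property~2 is then a one-line consequence: applying property~1 gives $(A \otimes B)(A^{-1} \otimes B^{-1}) = (AA^{-1}) \otimes (BB^{-1}) = I \otimes I = I$, and similarly on the other side, so $A^{-1} \otimes B^{-1}$ is the inverse. Properties~3 and~4 follow from the bilinearity of the block assignment $(i,j) \mapsto a_{ij} C$ (resp.\ $a_{ij} (B+C)$) in each argument: the $(i,j)$ block of $(A+B) \otimes C$ equals $(a_{ij} + b_{ij}) C = a_{ij} C + b_{ij} C$, which matches the sum of the corresponding blocks of $A \otimes C$ and $B \otimes C$, and the same reasoning with scalars pulled into the second slot handles property~4.

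For property~5, the ``if'' direction is immediate since every block of $A \otimes B$ is a scalar multiple of $B$ (or zero). For the converse I would argue by contrapositive: if both $A \neq 0$ and $B \neq 0$, choose indices with $a_{ij} \neq 0$ and $b_{k\ell} \neq 0$; then the $(k,\ell)$ entry of the $(i,j)$ block of $A \otimes B$ is the nonzero scalar $a_{ij} b_{k\ell}$, so $A \otimes B \neq 0$. There is no serious obstacle in any of these arguments; the only care required is tracking block indices against entry indices, and property~1 is the single computation on which the rest rest.
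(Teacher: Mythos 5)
Your proof is correct. Note that the paper itself gives no proof of this lemma --- it is simply quoted from Horn and Johnson's \emph{Matrix Analysis} --- so there is no argument in the paper to compare against; your direct verification from the block definition of the Kronecker product is the standard one, and each step (the block multiplication for property~1, the two-sided inverse check for property~2, bilinearity for properties~3 and~4, and the contrapositive entry-level argument for property~5) is sound. The only point worth being explicit about is the dimension bookkeeping in property~1 (one needs $AC$ and $BD$ both defined, i.e.\ $A$ is $m\times n$, $C$ is $n\times p$, $B$ is $q\times r$, $D$ is $r\times s$), which you flag but do not write out; that is a matter of presentation, not a gap.
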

\begin{lemma} (\cite{horn1994topics}) \label{le3}
Suppose that two matrices $A$ and $B$ are similar. i.e, there exists a non-singular matrix $P$ such that $PBP^{-1}=A$. If $\nu $ is a left eigenvector of $A$ with respect to the eigenvalue $\lambda $, then $\nu P$ is an eigenvector of $B$ with respect to the eigenvalue $\lambda $.
\end{lemma}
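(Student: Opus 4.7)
The plan is straightforward: substitute the similarity relation directly into the left-eigenvector equation. I would start by writing the hypothesis that $\nu$ is a left eigenvector of $A$ associated with $\lambda$ as the row-vector identity $\nu A = \lambda \nu$, in line with the convention adopted in Lemma \ref{le1}. The similarity assumption then allows me to replace $A$ by $PBP^{-1}$, yielding $\nu P B P^{-1} = \lambda \nu$.

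Right-multiplying both sides by $P$ immediately cancels the trailing $P^{-1}$ on the left-hand side and produces $(\nu P)\,B = \lambda (\nu P)$. By the definition of a left eigenvector, this is precisely the statement that $\nu P$ is a left eigenvector of $B$ associated with the eigenvalue $\lambda$, which is the desired conclusion.

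The one small bookkeeping point I would make explicit is that $\nu P \neq 0$, so that calling it an eigenvector is legitimate. Since $P$ is nonsingular, the linear map $\nu \mapsto \nu P$ on row vectors is a bijection and hence sends the nonzero vector $\nu$ to a nonzero vector; equivalently, if $\nu P = 0$ then right-multiplying by $P^{-1}$ would give $\nu = 0$, contradicting the assumption that $\nu$ is an eigenvector.

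There is no real obstacle in this argument: the whole proof reduces to a two-line manipulation. The only subtlety worth flagging is matching the side (left vs. right) of multiplication to the left-eigenvector convention, which is why right-multiplication by $P$ -- rather than left-multiplication by $P^{-1}$ -- is the natural operation here.
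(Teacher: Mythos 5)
Your proof is correct and complete: the substitution $\nu A = \nu PBP^{-1} = \lambda\nu$ followed by right-multiplication by $P$ gives $(\nu P)B = \lambda(\nu P)$, and your remark that $\nu P \neq 0$ by nonsingularity of $P$ is exactly the detail needed to call $\nu P$ an eigenvector. The paper itself states this lemma as a cited result from Horn and Johnson and gives no proof, so there is nothing to compare against; your argument is the standard one and would serve as the omitted proof.
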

\section{Model formulation}\label{section3}
Consider a networked linear time invariant system with $N$ nodes, where each node system is of dimension, $n$. Specifically, the dynamical system corresponding to the node $i$ is described by 
\begin{equation}
\dot{x_i}(t)=A_ix_i(t)+\sum_{j=1}^{N}c_{ij}Hx_j(t)+d_iBu_i(t),\ \ \ i=1,2,\ldots ,N \label{eq:1}
\end{equation} 
where $x_i(t)\in \mathbb{R}^n$ is a state vector; $u_i(t) \in \mathbb{R}^m $ is an external control input vector; $A_i\in \mathbb{R}^{n \times n}$ is the state matrix of node $i$; $ B \in \mathbb{R}^{n \times m}$ is the input matrix, with $d_i=1$ if node $i$ is under control, otherwise $d_i=0$. $c_{ij} \in \mathbb{R}$ represents the coupling strength between the nodes $i$ and $j$ with   $c_{ij} \neq 0$ if there is a communication from node $j$ to node $i$, but otherwise $c_{ij}=0$, for all $i,j=1,2,\ldots ,N $ and $H\in \mathbb{R}^{n\times n}$ is the inner coupling matrix describing the interconnections among the components $x_j,j=1,2, \ldots ,N$.\\
Denote \begin{equation}
C=\left[ c_{ij} \right] \in \mathbb{R}^{N \times N}\ \ \ and \ \ \ D=diag \lbrace d_1,d_2, \ldots ,d_N \rbrace \label{eq:2}
\end{equation}
which represent the network topology and external input channels of the networked system \eqref{eq:1}, respectively. Denote the whole state of the networked system by $X=\left[ x_1^T,\ldots ,x_N^T \right]^T $ and  the total external control input vector by $U=\left[ u_1^T,\ldots ,u_N^T \right] ^T$.
Then the networked system \eqref{eq:1} can be rewritten in a compact form as
\begin{equation} 
\dot{X}(t)=F X(t)+ GU(t) \label{eq:3}
\end{equation}
with \begin{equation}
  F=A+C\otimes H ,\  G=D\otimes B \label{eq:4}
  \end{equation}
 where $A=blockdiag\lbrace A_1,A_2,\ldots ,A_N \rbrace$.
\section{Main Results}\label{sec4}
 \subsection{Controllability in a General Network Topology}\label{subsection1}
 In this section a necessary and sufficient condition for the controllability of \eqref{eq:3} is derived. First of all, the left eigenvectors of $F$ are expressed in terms of eigenvectors of some smaller matrices.
 \begin{theorem}\label{prop1}
Let $T$ be a matrix such that $TCT^{-1}=J=uppertriang\lbrace \lambda_1,\lambda_2,\ldots ,\lambda_N \rbrace $, where $J$ is the Jordan Canonical Form of $C$. Suppose that $T \otimes I$ commutes with $A$. Also let $M_i=\lbrace \mu_i^1, \ldots ,\mu_i^{q_i} \rbrace$, be the set of eigenvalues of $A_i + \lambda_i H ,i=1,2,\ldots ,N$. Then $\sigma (F) =\lbrace \mu_1^1, \ldots ,\mu_1^{q_1},\ldots ,\mu_N^1, \ldots ,\mu_N^{q_N} \rbrace $. Let $\xi_{ij}^k,k=1,\ldots , \gamma_{ij}$ be the left eigenvectors of $A_i+ \lambda_iH$ corresponding to $\mu_i^j,j=1,\ldots ,q_i ,i=1,\ldots ,N$, where $\gamma_{ij} \geq 1$ is the geometric multiplicity of the eigenvalue $\mu_i^j$ for $A_i+\lambda_iH$. If $J$ is a diagonal matrix, $e_iT \otimes \xi_{ij}^k,k=1,\ldots , \gamma_{ij}$ are the left eigenvectors of $F$ corresponding to the eigenvalue $\mu_i^j,j=1,\ldots ,q_i ,i=1,\ldots ,N$ . If $J$ contains a Jordan block of order $l\geq 2$ for some eigenvalue $\lambda_{i_0}$ of $C$ with $\xi_{ij}^kH=0$ for all $i=i_0,i_0+1,\ldots ,i_0+l-1,j=1,2,\ldots,q_i, k=1,2,\ldots , \gamma_{ij}$, then $e_{i}T \otimes \xi_{ij}^k,k=1,\ldots , \gamma_{ij}$ are the left eigenvectors of $F$ corresponding to the eigenvalue $\mu_{i}^{j}, i=1,2,\ldots ,N,j=1,2,\ldots ,q_i$.
 \end{theorem}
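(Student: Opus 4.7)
The plan is to reduce the study of $F$ to that of a block upper triangular matrix via a similarity transformation and then lift left eigenvectors back through Lemma \ref{le3}. Concretely, set $P = T \otimes I$ and consider $\tilde{F} := P F P^{-1}$. Using that $T \otimes I$ commutes with $A$ together with the mixed-product identity in Lemma \ref{le2}, one obtains $\tilde{F} = A + (TCT^{-1}) \otimes H = A + J \otimes H$. Since $A$ is block diagonal with blocks $A_i$ and $J$ is upper triangular with diagonal entries $\lambda_i$, the matrix $\tilde{F}$ is itself block upper triangular with diagonal blocks $A_i + \lambda_i H$. Hence $\sigma(F) = \sigma(\tilde{F}) = \bigcup_{i=1}^{N} \sigma(A_i + \lambda_i H) = \{\mu_i^j : 1 \leq i \leq N,\, 1 \leq j \leq q_i\}$, which settles the spectrum claim.

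For the eigenvectors, the natural candidates in the transformed coordinates are $\eta_{ij}^k := e_i \otimes \xi_{ij}^k$, and I would verify directly that $\eta_{ij}^k \tilde{F} = \mu_i^j \eta_{ij}^k$. The central computation, again via Lemma \ref{le2}, is $(e_i \otimes \xi_{ij}^k)(A + J \otimes H) = e_i \otimes \xi_{ij}^k A_i + (e_i J) \otimes (\xi_{ij}^k H)$. Expanding $e_i J$ using the Jordan structure, the diagonal part contributes $\lambda_i e_i \otimes \xi_{ij}^k H$, which combines with $e_i \otimes \xi_{ij}^k A_i$ to give $\mu_i^j (e_i \otimes \xi_{ij}^k)$ because $\xi_{ij}^k$ is a left eigenvector of $A_i + \lambda_i H$ for $\mu_i^j$. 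Any remaining superdiagonal contribution produces an additional summand of the form $e_{i+1} \otimes \xi_{ij}^k H$. When $J$ is diagonal this extra term never arises. When $J$ has a Jordan block of order $l$ at indices $i_0, \ldots, i_0 + l - 1$, the extra term is nonzero only for $i_0 \leq i \leq i_0 + l - 2$, and the hypothesis $\xi_{ij}^k H = 0$ on this block annihilates it; the last row $i = i_0 + l - 1$ has no superdiagonal contribution at all.

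Once each $\eta_{ij}^k$ is shown to be a left eigenvector of $\tilde{F}$, applying Lemma \ref{le3} to the similarity $\tilde{F} = P F P^{-1}$ converts it into the left eigenvector $\eta_{ij}^k (T \otimes I) = e_i T \otimes \xi_{ij}^k$ of $F$ corresponding to the same eigenvalue $\mu_i^j$, exactly as asserted.

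The principal technical obstacle I anticipate is the careful bookkeeping of the row vector $e_i J$ when several Jordan blocks of varying sizes coexist in $J$: one must separate the interior rows of each block (which generate the superdiagonal term requiring $\xi_{ij}^k H = 0$) from the terminal row of each block (which behaves as in the diagonal case), and ensure that indices lying strictly between distinct Jordan blocks produce no unwanted coupling through $J \otimes H$. Beyond this, every step is a direct application of the Kronecker identities in Lemma \ref{le2}.
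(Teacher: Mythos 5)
Your strategy is exactly the paper's: conjugate by $T \otimes I$ to obtain $\tilde F = A + J \otimes H$, read off the spectrum from the block upper triangular structure, verify that $e_i \otimes \xi_{ij}^k$ is a left eigenvector of $\tilde F$, and transfer back to $F$ via Lemma \ref{le3}. Your verification through the decomposition $e_i J = \lambda_i e_i + \delta e_{i+1}$ with $\delta \in \{0,1\}$ is correct and in fact tidier than the paper's, which writes out the $2\times 2$ block \eqref{4.1} explicitly and iterates over the $l-1$ sub-blocks of a Jordan block of order $l$. The bookkeeping you flag as the main obstacle is precisely the distinction between rows of $J$ carrying a superdiagonal $1$ (which force the condition $\xi_{ij}^k H = 0$) and terminal rows of blocks (which behave as in the diagonal case), and you have already resolved it correctly; rows lying in distinct Jordan blocks cannot couple because $J \otimes H$ respects the block partition.

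The one genuine omission is the converse inclusion. The theorem asserts that the vectors $e_i T \otimes \xi_{ij}^k$ \emph{are the} left eigenvectors of $F$ for $\mu_i^j$, and the paper devotes the second half of its proof to showing that every left eigenvector of $\tilde F$ arises this way: writing a general left eigenvector as $\xi = [\xi_1, \ldots, \xi_N]$ with $\xi_i \in \mathbb{R}^n$, the relation $\xi^T \tilde F = \mu \xi^T$ forces $\xi_i (A_i + \lambda_i H) = \mu \xi_i$ blockwise, and the hypothesis $\xi_i H = 0$ eliminates the Jordan coupling terms, so that $\xi$ is either some $e_i \otimes \xi_i$ or, when several blocks $A_{i_1}+\lambda_{i_1}H, \ldots, A_{i_r}+\lambda_{i_r}H$ share the eigenvalue $\mu$, a combination $\sum_{\alpha} e_{i_\alpha} \otimes \xi_{i_\alpha}$. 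You prove only that your candidates are left eigenvectors, not that they exhaust them. This completeness half is not decorative: the sufficiency direction of Theorem \ref{thm1} begins with an arbitrary left eigenvector $\tilde v$ of $F$ satisfying $\tilde v G = 0$ and needs to know that $\tilde v$ is a linear combination of the listed vectors before it can derive a violation of conditions 1--3. You should add this classification step to close the argument.
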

 \begin{proof}
 Let $T$ be the non-singular matrix such that $TCT^{-1}=J$, where $J=uppertriang\lbrace \lambda_1,\lambda_2,\ldots ,\lambda_N \rbrace$  is the Jordan Canonical form of $C$. Then, we have 
\begin{align*}
\tilde{F} &=(T \otimes I)(A+C \otimes  H)(T^{-1} \otimes I) \\
&= A+J \otimes H \\
&= A+uppertriang\lbrace \lambda_1,\lambda_2,\ldots ,\lambda_N \rbrace \otimes H \\
&= blockuppertriang \lbrace A_1+\lambda_1 H, \cdots ,A_N +\lambda_N H\rbrace
\end{align*}
Since $\tilde{F}$ and $F$ have same eigenvalues, $\sigma (F) =\lbrace \mu_1^1, \ldots ,\mu_1^{q_1},\ldots ,\mu_N^1, \ldots ,\mu_N^{q_N} \rbrace $. Now let $\xi_{ij}^k,k=1,\ldots , \gamma_{ij}$ are the left eigenvectors of $A_i+ \lambda_iH$ corresponding to $\mu_i^j,j=1,\ldots ,q_i ,i=1,\ldots ,N$. If $J$ is a diagonal matrix, $\tilde{F}$ is a block diagonal matrix and hence $e_i \otimes \xi_{ij}^k,k=1,\ldots , \gamma_{ij}$ are left eigenvectors of $\tilde{F} $ corresponding to $\mu_i^j,j=1,\ldots ,q_i ,i=1,\ldots ,N$. Now suppose that $J$ contains a Jordan block of order 2, corresponding to the eigenvalue $\lambda_{i_0}$ of $C$. Then the matrix $\tilde{F}$ contains block matrix of the form 
\begin{align}\label{4.1}
\mathcal{A}=
\begin{bmatrix}
A_{i_0}+\lambda_{i_0}H & H \\
0 & A_{i_0+1}+ \lambda_{i_0+1}H
\end{bmatrix}
\end{align}
 Clearly, $e_{i_0+1} \otimes \xi_{i_0+1j}^{k},k=1,2, \ldots , \gamma_{i_0+1j}$ are eigenvectors of $\tilde{F}$ corresponding to the eigenvalues $\mu_{i_0+1j},j=1,2,\ldots ,q_{i_0+1}$. If $\xi_{i_0j_0}^kH=0$ for all $k=1,2,\ldots , \gamma_{i_0j_0}$,  $e_{i_0}\otimes \xi_{i_0j_0}^k,k=1,2,\ldots , \gamma_{i_0j_0}$ are  left eigenvectors of $\tilde{F}$ corresponding to the eigenvalue $\mu_{i_0j_0}$. Now suppose that $J$ contains a Jordan block order $l\geq 2$ for some eigenvalue $\lambda_{i_0}$ of $C$,  then again we can consider $(l-1)$ block matrices of the form  \eqref{4.1} and by using the fact that $\xi_{ij}^kH=0$ for all $i=i_0,i_0+1,\ldots ,i_0+l-1,j=1,2,\ldots,q_i, k=1,2,\ldots , \gamma_{ij}$ we get $ e_i \otimes \xi_{ij}^k,k=1,2,\ldots , \gamma_{ij}$ are left eigenvectors of $\tilde{F}$ corresponding to the eigenvalue $\mu_i^j,i=1,2,\ldots ,N,j=1,2,\ldots ,q_i$. Now we will prove that, they are the only eigenvectors of $\tilde{F}$. Suppose that $\tilde{F}$ does not have any Jordan blocks and let $\xi = \begin{bmatrix}
 \xi_1 & \xi_2 & \ldots & \xi_N 
\end{bmatrix} \in \mathbb{R}^{Nn}$ be a left eigenvector of $\tilde{F}$ corresponding to the eigenvalue $\mu $, where $ \xi_1 , \xi_2 , \ldots , \xi_N 
 \in \mathbb{R}^{n}$. Then $\xi^T \tilde{F} = \mu \xi^T$ implies that
 $$  \begin{bmatrix}
 \xi_1 \left( A_1+\lambda_1 H \right)  \\
 \xi_2 \left( A_2+\lambda_2 H \right)  \\
\vdots  \\
 \xi_N \left( A_N+\lambda_N H \right) 
\end{bmatrix}^T = \mu \begin{bmatrix}
 \xi_1 \\ \xi_2 \\ \vdots \\ \xi_N 
\end{bmatrix}^T $$
which implies that $\mu $ is an eigenvalue of $A_i+\lambda_i H$ for all $i$ with $\xi_i$ as an eigenvector. Now suppose that $\tilde{F}$ has a block of the form \eqref{4.1}.  Then $\xi^T \tilde{F} = \mu \xi^T$ implies that 
 $$  \begin{bmatrix}
 \xi_1 \left( A_1+\lambda_1 H \right)  \\
\vdots  \\
 \xi_i \left( A_i+\lambda_i H \right)  \\
 \xi_iH+ \xi_{i+1}H  \left( A_2+\lambda_2 H \right)  \\
 \vdots \\
 \xi_1 \left( A_N+\lambda_N H \right) 
\end{bmatrix}^T = \mu \begin{bmatrix}
 \xi_1 \\ \vdots \\ \xi_i \\ \xi_{i+1} \\ \vdots \\ \xi_N 
\end{bmatrix}^T $$
As $\xi_i \left( A_i+\lambda_i H \right)= \mu \xi_i$, $\xi_i$ is a left eigenvector of $ A_i+\lambda_i H $. Then by our hypothesis, $\xi_iH=0$. Hence $\mu $ is an eigenvalue of $A_i+\lambda_i H$ for all $i$ with $\xi$ as an eigenvector. Thus if $A_i+\lambda_i H,i=1,2,\ldots ,N$ does not have a common eigenvalue, then the left eigenvectors of $\tilde{F}$ are of the form $e_i \otimes \xi$, where $\xi $ is a left eigenvector of $A_i + \lambda_i H$ for some $i$. If they have a common eigenvalue, the eigenvectors are either of the form  $e_i \otimes \xi$, where $\xi $ is a left eigenvector of $A_i + \lambda_i H$ for some $i$ or of the form $\sum_{\alpha =1}^r e_{i_{\alpha}} \otimes \xi_{i_{\alpha}} $, where $A_i + \lambda_i H,i \in \lbrace i_1,i_2, \ldots ,i_r \rbrace $ have a common eigenvalue $\mu$ with eigenvector $\xi_{i_{\alpha}}$ for each $i_1,i_2, \ldots ,i_r$.
  \par   Thus in both cases, by Lemma \ref{le2}(1) and Lemma \ref{le3}, $\left( e_i \otimes \xi_{ij}^k\right)\left( T \otimes I \right)=   e_iT \otimes \xi_{ij}^k(k=1,\ldots , \gamma_{ij})$ are the left eigenvectors of $F$ corresponding to $\mu_i^j,j=1,\ldots ,q_i ,i=1,\ldots ,N$.
 \end{proof}
 Now using the above theorem, we will prove the following necessary and sufficient condition for controllability of a heterogeneous networked system \eqref{eq:3}. 
 \begin{theorem} \label{thm1}
  Consider a heterogeneous networked system with a network topology $C$  where $TCT^{-1}=J=uppertriang\lbrace \lambda_1,\lambda_2,\ldots , \lambda_N \rbrace $. Suppose that $\xi_{ij}^kH=0$ for all $i=i_0,i_0+1,\ldots ,i_0+l-1,j=1,2,\ldots,q_i, k=1,2,\ldots , \gamma_{ij}$ if $J$ contains a Jordan block of order $l\geq 2$ corresponding to the eigenvalue $\lambda_{i_0}$ of $C$ , where $\xi_{ij}^k,i=1,2,\ldots ,N,j=1,2,\ldots ,q_i,k=1,2,\ldots ,\gamma_{ij}$ are the left eigenvectors of $A_i+\lambda_iH$ corresponding to the eigenvalues $\mu_i^j,i=1,2,\ldots ,N,j=1,2,\ldots ,q_i$. Then the networked system is controllable if and only if 
\begin{enumerate}
\item $e_iTD \neq 0$ for all $i=1,\ldots ,N$ , where $\lbrace e_i \rbrace$ is the canonical basis for $\mathbb{R}^N$.
\item $(A_i+\lambda_i H,B)$ is controllable, for $i=1,2,\cdots ,N$; and
\item If matrices $A_{i_1}+\lambda_{i_1}H ,\ldots , A_{i_p}+\lambda_{i_p}H (\lambda_{i_k} \in \Lambda,\ for\ k=1, \ldots ,p,\ p>1)$ have a common eigenvalue $\sigma $, then $(e_{i_1}TD) \otimes (\xi_{i_1}^1B), \cdots ,(e_{i_1}TD) \otimes (\xi_{i_1}^{\gamma_{i_1}}B), \ldots ,(e_{i_p}TD) \otimes (\xi_{i_p}^1B), \ldots ,(e_{i_p}TD) \otimes (\xi_{i_p}^{\gamma_{i_p}}B)$ are linearly independent where $\gamma_{i_k} \geq 1$ is the geometric multiplicity of $\sigma $ for $A_{i_k}+\lambda_{i_k}H;\xi_{i_k}^l(l=1, \ldots , \gamma_{i_k})$ are the left eigenvectors of $A_{i_k}+\lambda_{i_k}H$ corresponding to $\sigma , k=1,\ldots ,p$.
\end{enumerate}
 \end{theorem}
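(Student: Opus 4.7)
The plan is to reduce the controllability of $(F,G)=(A+C\otimes H,\,D\otimes B)$ to the PBH left-eigenvector test of Lemma \ref{le1} and then plug in the explicit description of the left eigenvectors of $F$ provided by Theorem \ref{prop1}. Concretely, $(F,G)$ is controllable if and only if $\nu G\neq 0$ for every left eigenvector $\nu$ of $F$; so the strategy is to enumerate the left eigenvectors, compute $\nu G$ in each case using the mixed-product rule $(A\otimes B)(C\otimes D)=(AC)\otimes(BD)$ from Lemma \ref{le2}, and translate the non-vanishing of $\nu G$ into the three numbered conditions.

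First I would handle the generic case in which the spectra of the blocks $A_i+\lambda_iH$ are pairwise disjoint. By Theorem \ref{prop1}, every left eigenvector of $F$ then has the form $\nu=e_iT\otimes \xi_{ij}^k$. Using Lemma \ref{le2}, one computes
\begin{equation*}
\nu G \;=\; (e_iT\otimes \xi_{ij}^k)(D\otimes B) \;=\; (e_iTD)\otimes(\xi_{ij}^kB).
\end{equation*}
By item (5) of Lemma \ref{le2}, this is nonzero for every admissible $(i,j,k)$ precisely when $e_iTD\neq 0$ for all $i$ (which is condition (1)) and $\xi_{ij}^kB\neq 0$ for every left eigenvector of every $A_i+\lambda_iH$. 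The latter, by Lemma \ref{le1} applied to the pair $(A_i+\lambda_iH,B)$, is equivalent to controllability of that pair and hence gives condition (2).

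Next I would treat the degenerate case in which several of the blocks $A_{i_1}+\lambda_{i_1}H,\ldots,A_{i_p}+\lambda_{i_p}H$ share a common eigenvalue $\sigma$. Theorem \ref{prop1} describes the corresponding left eigenspace of $F$ as the span of all vectors $e_{i_k}T\otimes \xi_{i_k}^l$, so a general left eigenvector attached to $\sigma$ reads
\begin{equation*}
\nu \;=\; \sum_{k=1}^{p}\sum_{l=1}^{\gamma_{i_k}} \alpha_{k,l}\,\bigl(e_{i_k}T\otimes \xi_{i_k}^l\bigr),
\end{equation*}
and thus
\begin{equation*}
\nu G \;=\; \sum_{k=1}^{p}\sum_{l=1}^{\gamma_{i_k}} \alpha_{k,l}\,\bigl(e_{i_k}TD\bigr)\otimes\bigl(\xi_{i_k}^lB\bigr).
\end{equation*}
The requirement that $\nu G\neq 0$ for every nonzero choice of the coefficients $\alpha_{k,l}$ is exactly the statement that the family $\{(e_{i_k}TD)\otimes(\xi_{i_k}^lB)\}_{k,l}$ is linearly independent, which is condition (3). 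Combining the three cases yields both directions of the equivalence.

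The main obstacle I anticipate is the bookkeeping in the degenerate case: one has to be sure that Theorem \ref{prop1} is being used at full strength, namely that it exhausts the entire left eigenspace of $F$ at $\sigma$ (so that no extra left eigenvectors escape the PBH test), and one has to argue carefully that linear independence of the Kronecker products $(e_{i_k}TD)\otimes(\xi_{i_k}^lB)$ is both necessary (by choosing coefficients producing a nontrivial relation, one builds a left eigenvector killing $G$) and sufficient (any nonzero eigenvector then maps to a nonzero combination under $G$). The arguments for conditions (1) and (2) are the easy implications; condition (3) is where the Kronecker algebra and a clean use of Lemma \ref{le3} must be orchestrated correctly to close the equivalence.
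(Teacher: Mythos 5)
Your proposal is correct and follows essentially the same route as the paper: both reduce controllability of $(F,G)$ to the left-eigenvector (PBH) test of Lemma \ref{le1}, invoke Theorem \ref{prop1} to enumerate the left eigenvectors of $F$ as $e_iT\otimes \xi_{ij}^k$ (or linear combinations thereof when blocks share an eigenvalue), and apply the mixed-product rule to split $\nu G$ into $(e_iTD)\otimes(\xi_{ij}^kB)$, with the same case distinction between disjoint and shared spectra. The paper merely packages the two directions as separate necessity and contrapositive-sufficiency arguments rather than as a single biconditional chain, which is a cosmetic difference.
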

 \begin{proof}
 \textbf{(Necessary part)} From Theorem \eqref{prop1} we have that,$e_iT \otimes \xi_{ij}^k(k=1,\ldots , \gamma_{ij})$ are  left eigenvectors of $F$ corresponding to $\mu_i^j,j=1,\ldots ,q_i ,i=1,\ldots ,N$. If the networked system \eqref{eq:3} is controllable, then $$(e_iT \otimes \xi_{ij}^l)(D \otimes B)\neq 0,\ for\ l=1,\ldots , \gamma_{ij},j=1,\ldots ,q_i ,i=1,\ldots , N$$ which implies that $$e_iTD \neq 0,\ i=1,\ldots ,N,$$ and $$\xi_{ij}^lB \neq 0,\ for \  l=1,\ldots , \gamma_{ij},j=1,\ldots ,q_i ,i=1,\ldots , N$$
Since $\xi_{ij}^l $ is an arbitrary left eigenvector of $A_i +\lambda_i H$, this implies that $(A_i +\lambda_i H,B)$ is controllable, for $i=1,\ldots ,N$.\\
Assume that the matrices $A_{i_1}+\lambda_{i_1}H ,\cdots , A_{i_p}+\lambda_{i_p}H (\lambda_{i_k} \in \Lambda,\ for\ k=1, \ldots ,p,\ p>1)$ have a common eigenvalue $\sigma $. The geometric multiplicity of $\sigma $ for $A_{i_k}+\lambda_{i_k}H$ is denoted by $\gamma_{i_k}$, and the corresponding eigenvectors are denoted as $\xi_{i_k}^1,\ldots ,\xi_{i_k}^{\gamma_{i_k}},$ where $k=1, \ldots ,p$. Then all the left eigenvectors of $F$ corresponding to $\sigma $ can be expressed in the form of $\sum_{k=1}^p\sum_{l=1}^{\gamma_{i_k}}\alpha_{kl}(e_{i_k}T \otimes \xi_{i_k}^l)$, where $\alpha_{kl} \in \mathbb{R}(k=1,\ldots ,p,l=1,\ldots ,\gamma_{i_k})$ are parameters, which are not all zero. If the networked system is controllable, then $$\left[ \sum_{k=1}^p\sum_{l=1}^{\gamma_{i_k}}\alpha_{kl}(e_{i_k}T \otimes \xi_{i_k}^l)\right] (D \otimes B) \neq 0$$
Consequently, we have
$$\sum_{k=1}^p\sum_{l=1}^{\gamma_{ik}}\alpha_{kl}(e_{i_k}TD)\otimes (\xi_{ik}^lB)\neq 0$$
for any scalars $\alpha_{kl} \in \mathbb{R}(k=1,\ldots ,p,l=1,\ldots ,\gamma_{ik})$ , which are not all zero. Therefore, $(e_{i1}TD) \otimes (\xi_{i1}^1B), \ldots ,(e_{i1}TD) \otimes (\xi_{i1}^{\gamma_{i1}}B), \ldots ,(e_{ip}TD) \otimes (\xi_{ip}^1B), \ldots ,(e_{ip}TD) \otimes (\xi_{ip}^{\gamma_{ip}}B)$ are linearly independent.\\
\textbf{(Sufficiency part)} Suppose that the networked system is uncontrollable, then we will prove that atleast one condition in Theorem 1 does not hold. If the networked system is not controllable, then there exists a left eigenpair of $F$, denoted as $(\tilde{\mu},\tilde{v})$, such that $\tilde{v}G=0$.
\begin{itemize}
\item If $\tilde{\mu} \in M_{i_0}$ and $\tilde{\mu} \notin M_1 \cup \ldots \cup M_{i_0-1}\cup M_{i_0+1}\cup \ldots \cup M_N$. Then as earlier, $\tilde{v}=\sum_{l=1}^{\gamma_{{i_0}_{j_0}}}\alpha_0^l(e_{i_0}T\otimes \xi_{{i_0}_{j_0}}^l)$, where $\xi_{{i_0}_{j_0}}^1,\ldots ,\xi_{{i_0}_{j_0}}^{\gamma_{{i_0}_{j_0}}}$ are the left eigenvectors of $A_{i_0}+\lambda_{i_0}H$ corresponding to $\tilde{\mu}$; $\left[ \alpha_0^1,\ldots ,\alpha_0^{\gamma_{{i_0}_{j_0}}} \right] $ is some non zero vector. Now $\tilde{v}G=0$ implies
\begin{align*}
\sum_{l=1}^{\gamma_{{i_0}_{j_0}}}\alpha_0^l(e_{i_0}T\otimes \xi_{{i_0}_{j_0}}^l)(D \otimes B)&=\sum_{l=1}^{\gamma_{{i_0}_{j_0}}}\alpha_0^l(e_{i_0}TD)\otimes (\xi_{{i_0}_{j_0}}^lB)\\
&=(e_{i_0}TD) \otimes \left( \sum_{l=1}^{\gamma_{{i_0}_{j_0}}}\alpha_0^l\xi_{{i_0}_{j_0}}^lB \right)=0 
\end{align*}
which implies $e_{i_0}TD=0$ or $ \sum_{l=1}^{\gamma_{{i_0}_{j_0}}}\alpha_0^l\xi_{{i_0}_{j_0}}^lB=0$. Since $ \sum_{l=1}^{\gamma_{{i_0}_{j_0}}}\alpha_0^l\xi_{{i_0}_{j_0}}^l$ is the left eigenvector of $A_{i_0}+ \lambda_{i_0}H$, if $ \sum_{l=1}^{\gamma_{{i_0}_{j_0}}}\alpha_0^l\xi_{{i_0}_{j_0}}^lB=0$, then $\left(A_{i_0}+ \lambda_{i_0}H,B \right) $ is uncontrollable. i.e,  If the networked system is uncontrollable, then either there exists $\lambda_{i_0} \in \Lambda$ such that $\left(A_{i_0}+ \lambda_{i_0}H,B \right) $ is uncontrollable or $e_{i_0}TD=0$ for some $i_0$.
\item If $\tilde{\mu}$ is the common eigenvalue of the matrices $A_{i_1}+\lambda_{i_1}H,\ldots ,A_{i_p}+\lambda_{i_p}H(\lambda_{i_k} \in \Lambda,\ for\ k=1, \ldots ,p, p>1)$. The geometric multiplicity of $\tilde{\mu}$ for $A_{i_k}+\lambda_{i_k}H$ is denoted as $\gamma_{i_k}$, and the corresponding eigenvectors are denoted as $\xi_{i_k}^1,\ldots ,\xi_{i_k}^{\gamma_{i_k}}$, where $k=1,\ldots ,p$. Since $\tilde{v}$ can be expressed in the form $\sum_{k=1}^p\sum_{l=1}^{\gamma_{i_k}}\alpha_0^{kl}\left( e_{i_k}T \otimes \xi_{ik}^l\right) $, where $\alpha_0^{kl}(l=1,\ldots ,\gamma_{i_k},k=1,\ldots ,p)$ are some scalars, which are not all zero. Then $\tilde{v}G=0$ implies that there exists a non zero vector $\left[ \alpha_0^{11}, \ldots ,\alpha_0^{1\gamma_{i_1}}, \ldots ,\alpha_0^{p1}, \ldots ,\alpha_0^{p\gamma_{i_p}}\right] $ such that
$$\left[ \sum_{k=1}^p\sum_{l=1}^{\gamma_{i_k}}\alpha_0^{kl}\left( e_{i_k}T \otimes \xi_{i_k}^l\right) \right] (D\otimes B)=\sum_{k=1}^p\sum_{l=1}^{\gamma_{i_k}}\alpha_0^{kl}\left[ (e_{i_k}TD) \otimes (\xi_{i_k}^lB) \right]=0 $$ which implies that $(e_{i_1}TD) \otimes (\xi_{i_1}^1B), \ldots ,(e_{i_1}TD) \otimes (\xi_{i_1}^{\gamma_{i_1}}B), \ldots ,(e_{i_p}TD) \otimes (\xi_{i_p}^1B), \ldots ,(e_{i_p}TD) \otimes (\xi_{i_p}^{\gamma_{i_p}}B)$ are linearly dependent.
\end{itemize}
Therefore, if the networked system is uncontrollable, then atleast one condition in Theorem \ref{thm1} does not hold.
 \end{proof}
 The following examples establish the efficiency of the result for a heterogeneous networked system.
 \begin{example}
 Consider a system composed of 3 nodes in which two nodes are identical ,\vspace*{0.15 cm}  $A_1=A_3=\begin{bmatrix}
1 & 0 & 0 \\
0 & 1 & 1  \\
0 & 1 & 1
\end{bmatrix},A_2=\begin{bmatrix}
1 & 1 & 0 \\
0 & 1 & 1  \\
-1 & 0 & 1
\end{bmatrix},B=\begin{bmatrix}
1 \\
2 \\
1
\end{bmatrix}, H=\begin{bmatrix}
0 & 0 & 1 \\
0 & 0 & 0 \\
1 & 0 & 0
\end{bmatrix}, C=\begin{bmatrix}
0 & 0 & 1 \\
0 & 1 & 1 \\
0 & 0 & 1
\end{bmatrix}$ and $D=\begin{bmatrix}
1 & 0 & 0 \\
0 & 1 & 0 \\
0 & 0 & 1
\end{bmatrix}$. Then,
\begin{itemize}
\item  there exists $T=\begin{bmatrix}
1 & 0 & -1 \\
0 & 1 & 0 \\
0 & 0 & 1
\end{bmatrix}$ such that $TCT^{-1}=J$ where $J=\begin{bmatrix}
0 & 0 & 0 \\
0 & 1 & 1 \\
0 & 0 & 1
\end{bmatrix}$. Here $\lambda_1=0 ,\lambda_2=1$ and $\lambda_3=1$. Clearly $J$ contains a Jordan block of order 2. Observe that $\xi_2^1 = \begin{bmatrix}
0 & 1 & 0 
\end{bmatrix} $ is the only left eigenvector corresponding to $A_2+H$ and $\xi_2^1H=0$.  Also $T \otimes I$ commutes with $A$.
\item  $e_iTD \neq 0$ for all $i=1,2,3$ .
\item $(A_1,B),(A_2+H,B)$ and $(A_3+H,B)$ are controllable.
\item $A_1$,$A_2+H$ and $A_3+H$ have $\sigma =1 $ as a common eigenvalue with corresponding left eigenvectors $\xi_1^1 = \begin{bmatrix}
1 & 0 & 0 
\end{bmatrix} $,$\xi_2^1 = \begin{bmatrix}
0 & 1 & 0 
\end{bmatrix} $ and $\xi_3^1 = \begin{bmatrix}
1 & -1 & 0 
\end{bmatrix} $. Now $e_1TD \otimes \xi_1^1 B=\begin{bmatrix}
1 & 0 & -1 
\end{bmatrix} $,$e_2TD \otimes \xi_2^1 B=\begin{bmatrix}
0 & 1 & 0
\end{bmatrix} $ and $e_3TD \otimes \xi_3^1 B=\begin{bmatrix}
0 & 0 & -1
\end{bmatrix} $ are linearly independent.
\end{itemize}
 Thus all the conditions of Theorem \ref{thm1} are verified and hence the system is controllable. The controllability of the above system can be verified by Kalman's rank condition.
 \end{example}
 \begin{example}
 Consider a system composed of 3 nodes in which two nodes are identical ,\vspace*{0.15 cm} $A_1=\begin{bmatrix}
0 & 1 & 0 \\
0 & 0 & 1  \\
0 & 0 & 1
\end{bmatrix},A_2=A_3=\begin{bmatrix}
0 & 1 & 0 \\
0 & 0 & 1  \\
0 & 0 & 0
\end{bmatrix},B=\begin{bmatrix}
1 \\
0 \\
1
\end{bmatrix}, H=\begin{bmatrix}
1 & 0 & 0 \\
0 & 1 & 0 \\
0 & 0 & 1
\end{bmatrix}, C=\begin{bmatrix}
1 & 0 & 0 \\
0 & 1 & 0 \\
0 & 1 & 0
\end{bmatrix}$ and $D=\begin{bmatrix}
1 & 0 & 0 \\
0 & 1 & 0 \\
0 & 0 & 0
\end{bmatrix}$. Then,
\begin{itemize}
\item  there exists $T=\begin{bmatrix}
1 & 0 & 0 \\
0 & -1 & 1 \\
0 & \sqrt{2} & 0
\end{bmatrix}$ such that $TCT^{-1}=\begin{bmatrix}
1 & 0 & 0 \\
0 & 0 & 0 \\
0 & 0 & 1
\end{bmatrix}=J$. Here $\lambda_1=1 ,\lambda_2=0$ and $\lambda_3=1$. Clearly $J$ does not have any Jordan block of order $\geq 2$.  Also $T \otimes I$ commutes with $A$
\item $e_iTD \neq 0$ for all $i=1,2,3$.
\item $(A_1+H,B),(A_2,B)$ and $(A_3+H,B)$ are controllable.
\item $A_1+H$ and $A_3+H$ have a common eigenvalue 1 with corresponding left eigenvectors $\xi_1^1 = \begin{bmatrix}
0 & 1 & -1 
\end{bmatrix} $ and $\xi_3^1 = \begin{bmatrix}
0 & 0 & 1 
\end{bmatrix} $. Now $e_1TD \otimes \xi_1^1 B=\begin{bmatrix}
-1 & 0 & 0 
\end{bmatrix} $ and $e_3TD \otimes \xi_3^1 B=\begin{bmatrix}
0 & \sqrt{2} & 0
\end{bmatrix} $ are linearly independent.
\end{itemize}    
 Thus all the conditions of Theorem \ref{thm1} are verified and hence the system is controllable. The controllability of the above system can be verified by Kalman's rank condition.
 \end{example}
 We now state the following result as a corollary of the above theorem in which we provide a situation where the system is not controllable.
  \begin{corollary} \label{co1}
  If $e_iTD=0$ for some $i$, then the given system is not controllable. 
  \end{corollary}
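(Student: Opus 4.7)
The corollary is essentially a contrapositive of the necessary direction of Theorem \ref{thm1}, so my plan is to derive it directly from the machinery already developed, rather than argue from scratch.

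First I would invoke Theorem \ref{prop1}: under the standing hypotheses on $T$, $C$ and $H$, the vectors $e_iT \otimes \xi_{ij}^k$ (for $k=1,\ldots,\gamma_{ij}$, $j=1,\ldots,q_i$) are left eigenvectors of $F = A + C \otimes H$ corresponding to the eigenvalues $\mu_i^j$. Fix the index $i$ for which $e_iTD = 0$, and pick any left eigenvector $\xi_{ij}^k$ of $A_i + \lambda_i H$; such a $\xi_{ij}^k$ exists since $A_i + \lambda_i H \in \mathbb{R}^{n\times n}$ always has at least one eigenvalue and hence at least one left eigenvector.

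Next I would use Lemma \ref{le1} (the left-eigenvector controllability criterion) applied to the pair $(F,G) = (A + C \otimes H,\, D \otimes B)$. Controllability of the networked system demands $(e_iT \otimes \xi_{ij}^k)(D \otimes B) \neq 0$. By Lemma \ref{le2}(1),
\begin{equation*}
(e_iT \otimes \xi_{ij}^k)(D \otimes B) = (e_iTD) \otimes (\xi_{ij}^k B).
\end{equation*}
Since $e_iTD = 0$ by hypothesis, Lemma \ref{le2}(5) forces this Kronecker product to vanish. Thus $(e_iT \otimes \xi_{ij}^k)$ is a left eigenvector of $F$ which is annihilated by $G$, so Lemma \ref{le1} (in its contrapositive form) tells us that the networked system \eqref{eq:3} is not controllable.

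There is essentially no obstacle here: the argument is a one-line consequence once Theorem \ref{prop1} and Lemma \ref{le2}(1),(5) are in hand. The only mild subtlety worth flagging explicitly in the write-up is that one must observe that a left eigenvector $\xi_{ij}^k$ of $A_i + \lambda_i H$ does exist, so that the eigenvector $e_iT \otimes \xi_{ij}^k$ whose annihilation we exhibit is genuinely nonzero (using nonsingularity of $T$, the row $e_iT$ is nonzero, and $\xi_{ij}^k \neq 0$ by definition, so their Kronecker product is nonzero by Lemma \ref{le2}(5) again).
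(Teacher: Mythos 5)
Your proposal is correct and follows essentially the same route as the paper's own proof: identify $e_iT \otimes \xi_{ij}^k$ as a left eigenvector of $F$ via Theorem \ref{prop1}, factor $(e_iT \otimes \xi_{ij}^k)(D \otimes B) = (e_iTD)\otimes(\xi_{ij}^kB) = 0$, and conclude by the PBH-type criterion of Lemma \ref{le1}. Your extra remarks on the existence and nonvanishing of the eigenvector are sensible bookkeeping that the paper leaves implicit.
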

  \begin{proof}
 We have, $e_iT \otimes \xi_{ij}^k(k=1,\cdots , \gamma_{ij})$  are  left eigenvectors of $F$ corresponding to $\mu_i^j,j=1,\ldots ,q_i ,i=1,\ldots ,N$. If $e_iTD=0$ for some $i$, say $i_0$, then $(e_{i_0}T \otimes \xi_{{i_0}_j}^k)(D \otimes B)=0$ for all $j=1,2,\ldots ,q_{i_0},k=1,2,\ldots , \gamma_{i_0j}$ which implies that the given system is not controllable.
  \end{proof}
  \begin{example}
  Consider a network composed of 3 identical nodes,\vspace*{0.15 cm}  $A_1=A_2=A_3=
  \begin{bmatrix}
0 & 1 & 0 \\
0 & 0 & 1 \\
0 & 0 & 0
\end{bmatrix}, B=\begin{bmatrix}
1 \\
1 \\
1
\end{bmatrix}, H=\begin{bmatrix}
1 & 0 & 0 \\
0 & 1 & 0 \\
0 & 0 & 1
\end{bmatrix}, C=\begin{bmatrix}
0 & 0 & 1 \\
1 & 0 & 0 \\
1 & 0 & 0
\end{bmatrix}$, and $D= \begin{bmatrix}
0 & 0 & 0 \\
0 & 1 & 0 \\
0 & 0 & 0
\end{bmatrix}$. Then there exists $T= \begin{bmatrix}
0 & 1 & -1 \\
\frac{\sqrt{3}}{2} & 0 & \frac{\sqrt{3}}{2} \\
-\frac{\sqrt{3}}{2}& 0 & \frac{\sqrt{3}}{2}
\end{bmatrix}$ such that $TCT^{-1}=\begin{bmatrix}
0 & 0 & 0 \\
0 & 1 & 0 \\
0 & 0 & -1
\end{bmatrix}=J$. From Corollary \ref{co1} it is easy to verify that the networked system is not controllable as $e_2TD=0$.
\end{example}
When the networked system is homogeneous, we have the following result.
  \begin{theorem} \label{thm2}
 Consider a homogeneous networked system, That is, $A_i=\tilde{A}\ for\ all\ i=1,\ldots ,N$ with a network topology  $C$ such that  $TCT^{-1}=J=uppertriang\lbrace \lambda_1,\lambda_2,\ldots ,\lambda_N \rbrace$, where $J$ is the Jordan Canonical Form of $C$. Suppose that $\xi_{ij}^kH=0$ for all $i=i_0,i_0+1,\ldots ,i_0+l-1,j=1,2,\ldots,q_i, k=1,2,\ldots , \gamma_{ij}$ if $J$ contains a Jordan block of order $l\geq 2$ corresponding to the eigenvalue $\lambda_{i_0}$ of $C$ , where $\xi_{ij}^k,i=1,2,\ldots,k=1,2,\ldots ,\gamma_{ij}$ are the left eigenvectors of $A_i+\lambda_iH$ corresponding to the eigenvalues $\mu_i^j$ and $\gamma_{ij} \geq 1$ represents the geometric multiplicity of $\mu_i^j$. Then the networked system \eqref{eq:3} is controllable if and only if 
\begin{enumerate}
\item $e_iTD \neq 0$ for all $i=1, \ldots ,N$ , where $\lbrace e_i \rbrace$ is the canonical basis for $\mathbb{R}^N$.
\item $(\tilde{A}+\lambda_i H,B)$ is controllable, for $i=1,2,\cdots ,N$; and
\item If matrices $\tilde{A}+\lambda_{i_1}H ,\ldots , \tilde{A}+\lambda_{i_p}H (\lambda_{i_k} \in \Lambda,\ for\ k=1, \ldots ,p,\ p>1)$ have a common eigenvalue $\sigma $, then $(e_{i_1}TD) \otimes (\xi_{i_1}^1B), \ldots ,(e_{i_1}TD) \otimes (\xi_{i_1}^{\gamma_{i_1}}B), \ldots ,(e_{i_p}TD) \otimes (\xi_{i_p}^1B), \ldots ,(e_{i_p}TD) \otimes (\xi_{i_p}^{\gamma_{i_p}}B)$ are linearly independent  where $\gamma_{i_k} \geq 1$ is the geometric multiplicity of $\sigma $ for $\tilde{A}+\lambda_{i_k}H;\xi_{i_k}^l(l=1, \ldots , \gamma_{i_k})$ are the left eigenvectors of $\tilde{A}+\lambda_{i_k}H$ corresponding to $\sigma , k=1,\ldots ,p$.
\end{enumerate}
 \end{theorem}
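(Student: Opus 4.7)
The plan is to derive Theorem \ref{thm2} as a direct specialization of Theorem \ref{thm1}. First I would observe that the homogeneous assumption $A_i=\tilde{A}$ for all $i$ means the block diagonal matrix $A$ in \eqref{eq:4} takes the clean Kronecker form $A = I_N \otimes \tilde{A}$. This allows me to verify the commutation hypothesis required by Theorem \ref{thm1} essentially for free.

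The key computation is to check that $T \otimes I$ commutes with $A = I_N \otimes \tilde{A}$. Using Lemma \ref{le2}(1) in both directions,
\begin{align*}
(T \otimes I)(I_N \otimes \tilde{A}) &= (TI_N) \otimes (I\tilde{A}) = T \otimes \tilde{A},\\
(I_N \otimes \tilde{A})(T \otimes I) &= (I_NT) \otimes (\tilde{A}I) = T \otimes \tilde{A},
\end{align*}
so the two products agree. Thus the commutation hypothesis of Theorem \ref{prop1} (and hence Theorem \ref{thm1}) is automatically fulfilled in the homogeneous setting, with no further restriction on $T$ or on the Jordan structure of $C$.

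Once the commutation is established, I would simply invoke Theorem \ref{thm1} with the substitution $A_i \leftarrow \tilde{A}$ throughout. Every occurrence of $A_i + \lambda_i H$ in the statement of Theorem \ref{thm1} becomes $\tilde{A} + \lambda_i H$; the eigenvectors $\xi_{ij}^k$ and their geometric multiplicities $\gamma_{ij}$ specialize accordingly, and the three enumerated conditions translate verbatim into the three conditions listed in Theorem \ref{thm2}. The assumption regarding Jordan blocks of order $l \geq 2$ carries over without modification.

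There is no real obstacle in this argument; the only point requiring attention is confirming that the commutation condition, which in the heterogeneous Theorem \ref{thm1} is imposed as an external hypothesis, is intrinsic to the homogeneous case and hence need not be listed among the hypotheses of Theorem \ref{thm2}. The proof therefore reduces to one short remark followed by a citation of the preceding theorem.
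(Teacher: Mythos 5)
Your proposal is correct and matches the paper's own proof essentially verbatim: the paper likewise writes $A=I\otimes\tilde{A}$, checks $(T\otimes I)(I\otimes\tilde{A})=T\otimes\tilde{A}=(I\otimes\tilde{A})(T\otimes I)$ via the mixed-product rule, and then cites Theorem \ref{thm1}. Your added remark that this makes the commutation hypothesis intrinsic to the homogeneous case is a fair (and slightly more explicit) framing of the same argument.
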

 \begin{proof}
  If the given system is homogeneous, then it can be represented in the form $\dot{X}(t)=F X(t)+ GU(t)$ where $F=I \otimes \tilde{A}+C \otimes H$ and $G=D \otimes B$. Since $(T \otimes I)(I \otimes \tilde{A})=T \otimes \tilde{A}=(I \otimes \tilde{A})(T \otimes I)$, the result follows from Theorem \ref{thm1}.
 \end{proof}
  In the following example, we verify the conditions of the Theorem \ref{thm2} to obtain the controllability of a homogeneous networked system.
 \begin{example}
 Consider a system with two identical nodes, $A_1=A_2=\begin{bmatrix}
1 & 1  \\
0 & 1  \\
\end{bmatrix},B=\begin{bmatrix}
0 \\
1
\end{bmatrix}, H=\begin{bmatrix}
1 & 0 \\
0 & 0
\end{bmatrix}, C=\begin{bmatrix}
0 & 1 \\
1 & 0
\end{bmatrix}$ and $D=\begin{bmatrix}
1 & 0 \\
0 & 0
\end{bmatrix}$.
Then,
\begin{itemize}
\item  there exists $T=\begin{bmatrix}
-1 & 1 \\
1 & 1
\end{bmatrix}$ such that $TCT^{-1}=\begin{bmatrix}
-1 & 0 \\
0 & 1
\end{bmatrix}$. Here $\lambda_1=-1$ and $\lambda_2=1$.
\item $e_iTD \neq 0$ for all $i=1,2$
\item  $(A_1-H,B),(A_2+H,B)$ are controllable. As $A_1-H$ and $A_2+H$ does not have a common eigenvalue, condition 3 does not apply.
\end{itemize}
 Thus all the conditions of Theorem \ref{thm2} are verified and hence the system is controllable.
  \end{example}
 Hao et. al(\cite{hao2018further}) has given a necessary and sufficient condition for a homogeneous networked system with a diagonalizable network topology matrix. We can derive Hao's result as a corollary of Theorem \ref{thm2} as given below.
  \begin{corollary} (\cite{hao2018further})
If the given system is homogeneous with $A_i=A$, for $i=1,2,\cdots N$  and $C$ is diagonalizable with eigenvalues $\lambda_1 , \lambda_2, \ldots , \lambda_N$. Denote  $\Lambda = \lbrace \lambda_1 , \lambda_2, \ldots , \lambda_N \rbrace $. Then the networked system \eqref{eq:1} is controllable if and only if
\begin{enumerate}
\item $(C,D)$ is controllable;
\item $(A+\lambda_i H,B)$ is controllable, for $i=1,2,\cdots ,N$; and
\item If matrices $A+\lambda_{i_1}H ,\ldots , A+\lambda_{i_p}H \left(  \lambda_{i_k} \in \Lambda,\ for\ k=1, \ldots ,p,\ p>1\right) $ have a common eigenvalue $\sigma $, then $(t_{i_1}D) \otimes (\xi_{i_1}^1B), \ldots ,(t_{i_1}D) \otimes (\xi_{i_1}^{\gamma_{i_1}}B), \ldots ,(t_{i_p}D) \otimes (\xi_{i_p}^1B), \ldots ,(t_{i_p}D) \otimes (\xi_{i_p}^{\gamma_{i_p}}B)$ are linearly independent , where $t_{i_k}$ is the left eigenvector of $C$ corresponding to the eigenvalue $\lambda_{i_k}$; $\gamma_{i_k} \geq 1$ is the geometric multiplicity of $\sigma $ for $A+\lambda_{i_k}H;\xi_{i_k}^l(l=1, \ldots , \gamma_{i_k})$ are the left eigenvectors of $A+\lambda_{i_k}H$ corresponding to $\sigma , k=1,\ldots ,p$.
\end{enumerate}
  \end{corollary}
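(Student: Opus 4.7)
The plan is to derive the corollary directly from Theorem \ref{thm2} by translating its hypotheses and conditions into the statement of Hao's criterion, exploiting the extra structure provided by $C$ being diagonalizable and all $A_i$ being equal to a common $A$.

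First I would observe that, since $C$ is diagonalizable, its Jordan canonical form $J = TCT^{-1} = \text{diag}\{\lambda_1, \ldots, \lambda_N\}$ contains no Jordan block of order $\geq 2$. Consequently, the technical hypothesis $\xi_{ij}^k H = 0$ appearing in Theorem \ref{thm2} is vacuously satisfied, and the compatibility requirement $(T \otimes I)A = A(T \otimes I)$ used implicitly through Theorem \ref{prop1} reduces to $(T \otimes I)(I \otimes A) = (I \otimes A)(T \otimes I)$, which is automatic. So the homogeneous case with diagonalizable $C$ falls entirely inside the regime where Theorem \ref{thm2} applies without extra assumptions.

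Next I would identify the row vectors $e_i T$ with left eigenvectors of $C$. By Lemma \ref{le3}, since $T C T^{-1} = J$ with $J$ diagonal, the standard basis vector $e_i$ is a left eigenvector of $J$ for $\lambda_i$, so $e_i T$ is a left eigenvector of $C$ for $\lambda_i$. Setting $t_i := e_i T$ gives the vectors appearing in Hao's formulation. Condition 3 of Theorem \ref{thm2} then becomes condition 3 of the corollary by direct substitution $e_{i_k} T D = t_{i_k} D$, and condition 2 becomes condition 2 of the corollary simply because $\tilde{A} = A$.

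The only substantive step that remains is to show that condition 1 of Theorem \ref{thm2}, namely $e_i T D \neq 0$ for every $i$, is equivalent to controllability of the pair $(C, D)$. This is where I would invoke the PBH criterion of Lemma \ref{le1}: since every left eigenvector of $C$ is (up to scalar) one of the $t_i = e_i T$, the condition $t_i D \neq 0$ for all $i$ is exactly the PBH controllability condition for $(C, D)$. I expect this equivalence to be the only place that requires a genuine argument, and the main subtlety is making sure that the left eigenvectors $t_i$ obtained from rows of $T$ exhaust all left eigenvectors of $C$ up to linear combinations within each eigenspace, which holds because $T$ is nonsingular and $J$ is diagonal, so the rows of $T$ form a basis of $\mathbb{R}^N$ compatible with the eigenspace decomposition of $C$. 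With this equivalence in hand, the three conditions of Theorem \ref{thm2} translate term-by-term into the three conditions of the corollary, completing the proof.
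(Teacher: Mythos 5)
Your proposal is correct and follows essentially the same route as the paper's own proof: identify $e_iT$ as a left eigenvector of $C$ for $\lambda_i$ from $TCT^{-1}=J$, invoke Lemma \ref{le1} to equate condition 1 of Theorem \ref{thm2} with controllability of $(C,D)$, and translate conditions 2 and 3 by the substitution $t_{i_k}=e_{i_k}T$ (the paper cites Hao et al.\ for the left-eigenpair correspondence of $F$ where you appeal directly to Theorem \ref{thm2}, a cosmetic difference). Note that both arguments share the same small imprecision: when an eigenvalue of $C$ has geometric multiplicity greater than one, $e_iTD\neq 0$ for a basis of the eigenspace does not by itself yield $\nu D\neq 0$ for every left eigenvector $\nu$ of $C$, so the equivalence of condition 1 with PBH controllability of $(C,D)$ really needs condition 3 to absorb that case.
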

  \begin{proof}
  We will show that under the hypothesis given in the corollary, the conditions given above are equivalent to conditions in Theorem \ref{thm2}. Clearly condition 2 in the above corollary is exactly the same as condition 2 in Theorem \ref{thm2}. First, we are going to show that  $(C,D)$ is controllable if and only if $e_iTD \neq 0$ for all $i=1,2,\ldots , N$.
Since $C$ is diagonalizable, there exists a matrix $T$ such that $TCT^{-1}=J$ where $J=diag \lbrace \lambda_1, \lambda_2, \ldots , \lambda_N \rbrace$.
   Now, 
   \begin{align*}
 TCT^{-1}=J &\Rightarrow TC=JT \\
&\Rightarrow e_iTC=e_iJT\ \forall \ i=1,2,\ldots , N \\
&\Rightarrow (e_iT)C=\lambda_i (e_iT)\ \forall \ i=1,2,\ldots , N   
   \end{align*}
i.e, $e_iT$ is a left eigenvector of $C$ corresponding to $\lambda_i$ for all $i=1,2,\ldots , N$. Then by Lemma \ref{le1}, $(C,D)$ is controllable if and only if $e_iTD \neq 0$ for all $i=1,2,\ldots , N$. In Hao et. al (\cite{hao2018further}) it is proved that for any left eigenpair $(\lambda_i ,t_i)$ of $C$ and $(\mu , \xi )$ of $A+ \lambda_iH$ , $\left( \mu , \xi(t_i \otimes I_n) \right) $ is a left eigenpair of $F$. Therefore in essence, the condition 3 in the above corollary is equivalent to condition 3 in Theorem \ref{thm2}.
  \end{proof}
\begin{remark}
 The existence of the matrix $T$ satisfying all the required condition is crucial in applying the theorem. If the given system is such that $A_i \neq A_j$ for all $i \neq j$, then for $A=blockdiag \lbrace A_1,A_2, \ldots , A_N \rbrace$ to commute with $(T \otimes I)$, $T$ must be a diagonal matrix. If $A_i = A_j$ for some $i \neq j$, then  $T_{ij}$ and $T_{ji}$ are the only possible non-zero elements along with the diagonal entries.
 \end{remark}
\subsection{Controllability in Special Network Topologies}\label{subsection2}
Now we obtain some controllability results over some specific network topologies. If there exists a node $j$ with no incoming edges, then we can derive a necessary condition for controllability of the heterogeneous networked system \eqref{eq:3} as follows.
\begin{theorem}
Suppose that there exists a node $j$ with no edges from any other nodes.
 Then if $(A_j,B)$ is not controllable, then the networked system is not controllable.
\end{theorem}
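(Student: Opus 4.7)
My strategy is to exhibit a left eigenvector of $F$ that is annihilated by $G$ and then invoke the PBH-style criterion of Lemma \ref{le1}. The key observation is that the hypothesis that node $j$ has no edges from any other nodes forces the $j$-th row of $C$ to vanish (interpreting the phrase as $c_{ji} = 0$ for every $i$, under the standard convention that a source node has no self-coupling either), so the dynamics of node $j$ decouple from the rest: $\dot x_j = A_j x_j + d_j B u_j$. Intuitively, the uncontrollability of $(A_j, B)$ must then lift to the whole networked system, and the proof is just the PBH verification of this intuition.

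First, I apply Lemma \ref{le1} to the pair $(A_j, B)$: its failure of controllability supplies a scalar $\mu$ and a nonzero row vector $v \in \mathbb{R}^n$ with $v A_j = \mu v$ and $v B = 0$. I then propose
\[
\tilde v := e_j \otimes v \ \in \ \mathbb{R}^{Nn},
\]
with $e_j$ the $j$-th canonical basis row of $\mathbb{R}^N$, as a candidate left eigenvector of $F = A + C \otimes H$ for the eigenvalue $\mu$.

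Next, I verify $\tilde v F = \mu \tilde v$ and $\tilde v G = 0$ using only the Kronecker identities in Lemma \ref{le2}(1). Since $A = blockdiag\lbrace A_1,\ldots,A_N\rbrace$, the block-diagonal structure gives
\[
\tilde v A = e_j \otimes (v A_j) = \mu (e_j \otimes v) = \mu \tilde v,
\]
while the vanishing of the $j$-th row of $C$ yields
\[
\tilde v (C \otimes H) = (e_j C) \otimes (v H) = 0,
\]
so $\tilde v F = \mu \tilde v$. The same identity produces
\[
\tilde v G = (e_j \otimes v)(D \otimes B) = (e_j D) \otimes (v B) = (d_j e_j) \otimes 0 = 0.
\]
Lemma \ref{le1} then concludes that $(F, G)$ is not controllable, and hence the networked system \eqref{eq:3} is not controllable.

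No step poses a genuine obstacle; the argument is a direct Kronecker manipulation. The only point requiring slight care is the reading of ``no edges from any other nodes'': if a self-loop $c_{jj} \neq 0$ were admitted, the same argument still goes through provided one replaces $(A_j, B)$ by $(A_j + c_{jj} H, B)$ in the hypothesis, because then $e_j C = c_{jj} e_j$ and choosing $v$ as a left eigenvector of $A_j + c_{jj} H$ absorbs the residual term $c_{jj}(e_j \otimes v H)$ into the eigenvalue equation.
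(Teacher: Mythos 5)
Your proof is correct and follows essentially the same route as the paper's: both lift a left eigenvector $\xi$ of $A_j$ with $\xi B=0$ (supplied by Lemma \ref{le1}) to the left eigenvector $e_j\otimes\xi$ of $F$, use the vanishing $j$-th row of $C$ to verify the eigenvector equation, and conclude via $(e_j\otimes\xi)(D\otimes B)=e_jD\otimes\xi B=0$. Your explicit Kronecker computations and the remark on absorbing a possible self-loop $c_{jj}\neq 0$ into $A_j+c_{jj}H$ are slightly more careful than the paper's presentation, but the argument is the same.
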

\begin{proof}
If there exists a node $j$ with no edges from any other nodes, the network topology matrix $C$ is of the form  $$C=\begin{bmatrix}
c_{11} & c_{12} & \ldots  & c_{1N} \\
c_{21} & c_{22} & \ldots & c_{2N} \\
\vdots & \vdots &\vdots & \vdots \\
c_{(j-1)1} & c_{(j-1)2} & \ldots  & c_{(j-1)N} \\
0 & 0 & \ldots & 0 \\
c_{(j+1)1} & c_{(j+1)2} & \ldots  & c_{(j+1)N} \\
\vdots & \vdots &\vdots & \vdots \\
c_{N1} & c_{N2} & \ldots & c_{NN}
\end{bmatrix}$$ 
Suppose that $(A_j,B)$ is not controllable. Then by Lemma 2.1, there exists a non-zero eigenvector $\xi $ of $A_j$ such that $\xi B=0$. The state matrix of the networked system $F$ is given by, $$F=\begin{bmatrix}
A_1+c_{11}H & c_{12}H & \ldots & \ldots & \ldots & c_{1N}H \\
c_{21} & A_2+c_{22}H & \ldots &\ldots & \ldots & c_{2N}H \\
\vdots & \vdots &\vdots & \vdots & \vdots & \vdots \\
0 & 0 & \ldots & A_j & \ldots & 0 \\
\vdots & \vdots &\vdots &\vdots & \vdots & \vdots \\
c_{N1}H & c_{N2}H & \ldots & \ldots & \ldots & A_N+c_{NN}H
\end{bmatrix}$$ and hence $e_j \otimes \xi $ is a left eigenvector of $F$. Since $\xi B=0$, $(e_j \otimes \xi) (D \otimes B)=e_jD \otimes \xi B=0 $. Then the networked system is not controllable.
\end{proof}
We have seen that the controllability of an individual node is necessary when there are no incoming edges to that node. But this is not the case when there are no outgoing edges from a node. Example 5 shows that the controllability of an individual node is not necessary, even if there are no outgoing edges from that node.
\begin{remark}
If there exists some node $j$ with no edges to any other nodes, the controllability of $(A_j,B)$ is not necessary for the controllability of the networked system.
 \end{remark}
 \begin{example}
  Consider a system with $A_1=\begin{bmatrix}
1 & 2 \\
1 & 3
\end{bmatrix}, A_2=\begin{bmatrix}
1 & 1 \\
0 & 0
\end{bmatrix},B=\begin{bmatrix}
1 \\
0
\end{bmatrix}, H=\begin{bmatrix}
0 & 1 \\
1 & 0
\end{bmatrix},C=\begin{bmatrix}
0 & 0 \\
1 & 0
\end{bmatrix}$ and $D=\begin{bmatrix}
1 & 0 \\
0 & 1
\end{bmatrix}$. Clearly there are no edges from node 2 and $(A_2,B)$ is not controllable. But the networked system is controllable.
 \end{example}
 The following theorem gives a situation where the controllability of an individual node with no outgoing edges is necessary.
\begin{theorem}
Suppose that there exists a node $j$ with no edges to any other nodes. If $\xi_i H=0$ for all left eigenvectors of $A_j$, then the controllability of $(A_j,B)$ is necessary for the controllability of the networked system.
\end{theorem}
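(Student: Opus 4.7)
The plan is to apply the Popov--Belevitch--Hautus criterion (Lemma~\ref{le1}) to the aggregated pair $(F,G)$ and argue by contrapositive: starting from a PBH witness of uncontrollability for $(A_j,B)$, I will lift it to a PBH witness for $(F,G)$. Concretely, suppose $(A_j,B)$ is uncontrollable, so that Lemma~\ref{le1} yields a nonzero left eigenvector $\eta$ of $A_j$, say $\eta A_j=\mu\eta$, with $\eta B=0$; the standing hypothesis that $\xi H=0$ for every left eigenvector $\xi$ of $A_j$ then forces $\eta H=0$.

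My candidate witness for $F$ is $\nu=e_j\otimes\eta$. To verify that it is a left eigenvector of $F$, I would decompose $F=\mathrm{blockdiag}\{A_1,\ldots,A_N\}+C\otimes H$ into blocks, so that the $(j,k)$-th block equals $c_{jk}H$ for $k\neq j$ and $A_j+c_{jj}H$ for $k=j$. The $k$-th block of $\nu F$ is then $\eta F_{jk}$, and the condition $\eta H=0$ annihilates every $c_{jk}H$ contribution (both off-diagonal and the self-loop $c_{jj}H$), leaving $\eta F_{jj}=\eta A_j=\mu\eta$ and $\eta F_{jk}=0$ for $k\neq j$. Hence $\nu F=\mu\nu$.

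To finish, Lemma~\ref{le2}(1) gives
\[
\nu G=(e_j\otimes\eta)(D\otimes B)=(e_jD)\otimes(\eta B),
\]
and since $\eta B=0$, Lemma~\ref{le2}(5) makes the right-hand side vanish. Lemma~\ref{le1} applied to $(F,G)$ then yields uncontrollability of the networked system, completing the contrapositive. I do not anticipate any real obstacle: the argument is a clean block-PBH lift, and the only delicate point is that the incoming couplings in the $j$-th block row of $F$ are not constrained by the ``no outgoing edges'' hypothesis on column $j$ of $C$---but this is precisely the job of the assumption $\eta H=0$.
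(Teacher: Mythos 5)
Your proof is correct and follows essentially the same route as the paper's: both argue by contrapositive, take a nonzero left eigenvector $\eta$ of $A_j$ with $\eta B=0$, use the hypothesis $\eta H=0$ to verify that $e_j\otimes\eta$ is a left eigenvector of $F$, and conclude via $(e_j\otimes\eta)(D\otimes B)=(e_jD)\otimes(\eta B)=0$. Your block-by-block check is in fact slightly more explicit than the paper's and correctly pinpoints that only the $j$-th block row of $F$ (the incoming couplings) enters the computation, so that the annihilation is carried entirely by the assumption $\eta H=0$.
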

\begin{proof}
If there exists some node $j$ with no edges to any other nodes, the network topology matrix $C$ is of the form,
 $$C=\begin{bmatrix}
c_{11} & c_{12} & \ldots & c_{1(j-1)} & 0 & c_{1(j+1)} & \ldots & c_{1N} \\
c_{21} & c_{22} & \ldots & c_{2(j-1)} & 0 & c_{2(j+1)} & \ldots & c_{2N} \\
\vdots & \vdots & \ldots & \vdots     &\vdots &\vdots & \ldots & \vdots \\
c_{N1} & c_{N2} & \ldots & c_{N(j-1)} & 0 & c_{N(j+1)} & \ldots & c_{NN}
\end{bmatrix}$$
The state matrix of the networked system $F$ is given by,
 $$F=\begin{bmatrix}
A_1+c_{11}H & c_{12}H & \ldots & 0 & \ldots & c_{1N}H \\
c_{21}H & A_2+c_{22}H & \ldots & 0 & \ldots & c_{2N}H \\
\vdots & \vdots & \ddots & \vdots & \ldots & \vdots \\
c_{j1}H & c_{j2}H & \ldots & A_j & \ldots & c_{jN}H \\
\vdots & \vdots & \ldots & \vdots & \ddots & \vdots \\
c_{N1}H & c_{N2}H & \ldots & 0 & \ldots & A_N+c_{NN}H
\end{bmatrix}$$
Suppose that $(A_j,B)$ is not controllable. Then there exists a non-zero eigenvector $\xi $ of $A_j$ such that $\xi B=0$. Since  $\xi_i H=0$ for all left eigenvectors of $A_j$, $e_j \otimes \xi$ is a left eigenvector of $F$ with $(e_j \otimes \xi) (D \otimes B)=e_jD \otimes \xi B=0 $ and hence the networked system is not controllable.
\end{proof}
\section{Conclusion and future work}\label{section5} 

A necessary and sufficient condition for the controllability of a heterogeneous system under a directed and weighted topology has been derived and substantiated with examples. Controllability results for the networked system over some specific topologies has also been derived. The result is effective and can be verified easily. In the present study the control matrix is uniform in all subsystems. But in the future we intend to study the controllability of the networked systems with heterogeneous control matrices. Another line of research could be an investigation of controllability of networked systems with delays and impulses. However, research in this context is performed, but for homogeneous networked systems with one dimensional communication having delays in control(\cite{muni2019delay}): but for heterogeneous networked systems, such investigation is yet to perform.

\section*{Acknowledgments}
The first author acknowledges with gratitude towards the financial help received from the Council of Scientific and Industrial Research(CSIR), India for the Ph.D work vide letter No.09/1187(0008)/2019-EMR-1 dated 25/07/2019 and the Department of Mathematics, Indian Institute of Space Science and Technology, India, for providing the required support to carry-out this research work.

\end{document}